\newtheorem{thm}{Theorem}[section]
\newtheorem{prop}[thm]{Proposition}
\newtheorem{defn}[thm]{Definition}
\newtheorem{lem}[thm]{Lemma}
\newcounter{step}
\newcounter{claim}
\newcounter{subclaim}
\newenvironment{proof}{\textbf{Proof}\ }{\hfill $\square$}
\newenvironment{claim}{\vspace{0.75\baselineskip}
\textbf{Claim\ }}{\vspace{0.75\baselineskip}}
\newcommand*{\dateenglish}{\renewcommand*{\today}{%
	\number\day \ifcase\day \or
	st\or nd\or rd\or th\or th\or th\or th\or th\or th\or th\or
	th\or th\or th\or th\or th\or th\or th\or th\or th\or th\or
	st\or nd\or rd\or th\or th\or th\or th\or th\or th\or th\or
	st\fi\space \ifcase\month \or
	January\or February\or March\or April\or May\or June\or
	July\or August\or September\or October\or November\or
	December\fi \space\number\year}}
\newcommand{\oshs}{one-sided Heegaard splitting}
\newcommand{\oss}{one-sided splitting}
\newcommand{\tshs}{two-sided Heegaard splitting}
\newcommand{\tss}{two-sided splitting}
\newcommand{\gi}{geometrically incompressible}
\newcommand{\gc}{geometrically compressible}
\begin{document}
\thispagestyle{plain}

\scalebox{2}{}

\begin{center}
\scalebox{1.5}{\textbf{Incompressible one-sided surfaces in}}

\scalebox{1.5}{\textbf{even fillings of Figure 8 knot space}}
\vspace{0.5\baselineskip}

\textsc{Loretta Bartolini}
\vspace{0.5\baselineskip}

\parbox{.8\textwidth}
{
\textbf{Abstract}
\vspace{.3\baselineskip}

\footnotesize{In the closed, non-Haken, hyperbolic class of examples generated by $(2p, q)$ Dehn fillings of Figure 8 knot space, the \gi{} one-sided surfaces are identified by the filling ratio $\frac{p}{q}$ and determined to be unique in all cases. When applied to \oshs{s}, this can be used to classify all \gi{} splittings in this class of closed, hyperbolic examples; no analogous classification exists for \tshs{s}.}
}
\end{center}
\vspace{\baselineskip}

\section{Introduction}

Knot and link exteriors provide a rich class of bounded manifolds, which form the basis of a wide range of $3$-manifold study. Many geometric techniques capitalise on the accessibility provided by knot and link diagrams, with particular reference to surfaces spanned by the link. Given any closed $3$-manifold can be realised by Dehn surgery on a link in $S^3$ \cite{lickorish,wallace}, one can hope to transfer the wealth of information about bounded surfaces in such exteriors to closed $3$-manifolds in general.

Whilst it is very straightforward to construct closed surfaces after Dehn filling, it is generally extremely difficult to determine the properties of such surfaces. In particular, the properties of incompressibility and strong irreducibility for two-sided surfaces are difficult both to predict and determine. Admitting non-orientability, however, gives rise to large classes of surfaces to which additional arguments specific to one-sided surfaces may be applied.

Under the restriction of orientability, a bounded two-sided surface in a link space can be `capped off' in a solid filling torus by the addition of a collection of meridian discs. As such, this process applies only to fillings along the boundary slope of a given orientable surface. In contrast, the admission of non-orientability for the closed surface allows a bounded surface in a link space, regardless of orientability, to be completed by attaching a \gi{}, one-sided surface in a solid filling torus. The only restriction required to ensure the existence of such a surface is that the filling has $(2p,q)$ slope. This is consistent with the requirement that a $3$-manifold have non-zero $\mathbb{Z}_2$-homology in order for it to contain closed, embedded, non-orientable surfaces.

Since infinite classes of fillings admit such surfaces, one can use information about surfaces in a single knot exterior to obtain information about large classes of closed examples. The Figure 8 knot is well-known and widely studied, with almost all Dehn fillings affording closed, hyperbolic, non-Haken $3$-manifolds. Using the known \gi{} surfaces in this exterior, one can use non-orientable surface techniques to classify the closed, \gi{}, one-sided surfaces in this interesting class of examples.

Many thanks to Hyam Rubinstein for helpful discussions throughout the preparation of this paper.

\section{Preliminaries}

Let $M$ be a compact, connected, orientable, irreducible $3$-manifold and let all maps be considered at $PL$.

\begin{defn}\label{defn:gi}
A surface $K \not= S^2$ embedded in $M$ is \gc{} if there exists an embedded, non-contractible loop on $K$ that bounds an embedded disc in $M$. Call $K$ \gi{} if it is not \gc{}.
\end{defn}

Since one-sided surfaces admit compressing discs with double points on the boundary, \gi{} surfaces normally have such singular discs.

\begin{defn}
A pair $(M, K)$ is called a \oshs{} if $K$ is a closed non-orientable surface embedded in a closed orientable $3$-manifold $M$ such that $H = M \setminus K$ is an open handlebody.
\end{defn}

Note that any \gi{} one-sided surface in an irreducible, non-Haken $3$-manifold provides a \oshs{}, since the boundary of the twisted $I$-bundle over the surface compresses to a collection 2-spheres, as discussed in Rubinstein~\cite{rubin78}. As such, within this class, \gi{} surfaces and \oss{s} are used interchangeably. The following statement relating existence to non-zero $\mathbb{Z}_2$-homology, therefore, applies equally to both perspectives:

\begin{thm}[\cite{rubin78}]\label{thm:incomp}
If $M$ is closed, orientable, irreducible and non-Haken, then there is a \gi{} \oss{} associated with any non-zero class in $H_2(M, \mathbb{Z}_2)$.
\end{thm}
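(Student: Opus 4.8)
The plan is to represent $\alpha$ by an embedded surface, reduce it to a geometrically incompressible one by minimising complexity, and then let the non-Haken hypothesis supply the one-sidedness. First I would invoke Poincar\'e duality over $\mathbb{Z}_2$ to write the given class $\alpha \in H_2(M,\mathbb{Z}_2)$ as the dual of a class in $H^1(M,\mathbb{Z}_2) = [M, \mathbb{R}\mathrm{P}^\infty]$. Representing that cohomology class by a map $f \co M \to \mathbb{R}\mathrm{P}^\infty$, homotoping $f$ into the $3$-skeleton $\mathbb{R}\mathrm{P}^3$, and making it transverse to a hyperplane copy of \rpa{}, the preimage of that hyperplane is a closed embedded surface (in general non-orientable) whose $\mathbb{Z}_2$-fundamental class is $\alpha$. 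This is where non-orientable representatives enter naturally: unlike the orientable $H_2(M,\mathbb{Z})$ theory, every mod-$2$ codimension-one class is carried by an embedded, but not necessarily two-sided, surface.

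Next I would minimise complexity among all such representatives. Define the complexity of an embedded surface with no $2$-sphere components to be $-\chi$, a non-negative integer, and let $K$ be a representative of $\alpha$ of least complexity. I claim $K$ is \gi{}. If not, Definition~\ref{defn:gi} supplies an embedded, non-contractible loop on $K$ bounding an embedded disc in $M$; compressing along this disc produces a surface $K'$ with $[K'] = [K] = \alpha$ in $H_2(M,\mathbb{Z}_2)$ --- since $K$ and $K'$ cobound the boundary of a regular neighbourhood of the compressing disc --- and with $\chi(K') = \chi(K) + 2$. Any $2$-sphere components of $K'$ bound balls, as $M$ is irreducible, so they are null-homologous and may be discarded without changing the class; the remainder is a representative of $\alpha$ of strictly smaller complexity, contradicting minimality. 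Note that $K'$ cannot consist solely of spheres, for then $\alpha = 0$.

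It remains to see that $K$ is one-sided and yields a splitting, and this is where the hypotheses do the essential work. Each component of $K$ is \gi{} and, being part of a minimal representative, is not a $2$-sphere. For a two-sided surface the loop theorem upgrades geometric incompressibility to $\pi_1$-injectivity, so a two-sided \gi{} component of positive genus would be an incompressible surface in the irreducible manifold $M$, forcing $M$ to be Haken, contrary to hypothesis. Hence every component of $K$ is one-sided, that is, non-orientable, and since $\alpha \not= 0$ the surface $K$ is non-empty. The remark following the definition of a \oshs{} then identifies such a \gi{} one-sided surface with an \oss{}: the boundary of its twisted $I$-bundle compresses to $2$-spheres, which bound balls by irreducibility, so the complement is a handlebody. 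The step I expect to be the main obstacle is precisely this passage from compressibility data to one-sidedness: one must verify that the non-Haken hypothesis genuinely excludes every two-sided incompressible component, and, if a connected splitting surface is required, that the one-sided components can be amalgamated --- by tubing along arcs, which preserves the $\mathbb{Z}_2$-class --- and the complexity re-minimised to a single connected \gi{} representative.
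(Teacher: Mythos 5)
The paper itself contains no proof of this theorem: it is quoted as background, attributed to Rubinstein~\cite{rubin78}, so your attempt can only be measured against the standard argument that the citation encapsulates. Most of your route is exactly that standard argument, and those steps are sound: Poincar\'e duality and transversality to $\mathbb{R}\mathrm{P}^2 \subset \mathbb{R}\mathrm{P}^3 \subset \mathbb{R}\mathrm{P}^\infty$ give an embedded, possibly non-orientable, representative of $\alpha$; minimising $-\chi$ (discarding sphere components, which bound balls by irreducibility and so are null-homologous) gives a \gi{} representative; and the loop theorem plus the non-Haken hypothesis kills two-sided non-sphere components, so every component is one-sided. One secondary caveat: your surgery description, and the claim that $K$ and $K'$ cobound a regular neighbourhood of the disc, assumes the compressing curve is two-sided in $K$. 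If the curve has a M\"obius band neighbourhood in $K$, that band together with the disc is an embedded projective plane, and then irreducibility forces $M \cong \mathbb{R}\mathrm{P}^3$, where the theorem is immediate; this case needs to be split off rather than absorbed into the count.

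The genuine gap is the final step, connectivity, which you correctly flag as the main obstacle but then resolve with an argument that fails. A \oss{} requires a single surface whose complement is an open handlebody, and tubing one-sided components together and ``re-minimising'' is circular: since no component is a sphere, the meridian of any connecting tube is a non-contractible curve on the tubed surface that bounds an embedded disc in $M$ (the co-core of the tube), so the tubed surface is always \gc{}, and re-minimising complexity simply compresses that disc and re-disconnects the surface. The process never terminates in a connected \gi{} representative. The correct completion uses the handlebody structure you have already invoked: if a minimal representative had two components $K_1, K_2$, then both are \gi{} and one-sided, so the remark preceding the theorem (Rubinstein's compression of the twisted $I$-bundle boundary) applies to $K_1$ alone and shows $M \setminus K_1$ is an open handlebody containing the closed non-orientable surface $K_2$. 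That is impossible: a closed one-sided surface always admits a loop meeting it transversally in a single point (a fibre of its twisted $I$-bundle closed up along the connected boundary), hence carries a non-zero class in $H_2(\,\cdot\,, \mathbb{Z}_2)$, whereas a handlebody has vanishing second homology. So the minimal representative is automatically connected, it represents $\alpha$, and the same remark makes it a \gi{} \oss{}.
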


In compact $3$-manifolds with non-empty boundary, the usual notion of boundary compressions generalises to one-sided surfaces:

\begin{defn}
A bounded surface $K \subset M$ with $\partial K \subset \partial M$ is boundary compressible if there exists an embedded bigon $B \subset M$ with $\partial B = \alpha \cup \beta$, such that $\alpha = B \cap K$ is essential, $\beta = B \cap \partial M$ and $\alpha \cap \beta = \partial \alpha = \partial \beta$.
\end{defn}

If $K$ is one-sided and $K \cap T_k$ is a single essential loop for some torus $T_k \subseteq \partial M$, a boundary compressing bigon can correspond to the boundary compression of a M\"obius band. In this case, $\beta$ has ends on locally opposite sides of $\partial K$; if $\beta$ has ends on locally the same side of $\partial K$, this corresponds to a boundary compression in the usual sense of orientable surfaces. Note that there are no non-trivial orientable boundary compressions for a \gi{} surface at a single essential boundary loop, since any arc on a torus with both ends on locally the same side of the loop $K \cap T_k$ can be isotoped onto $K$, thus describing a geometric compression.

Bartolini~\cite{boundarycomp} establishes various structural properties of bounded, one-sided surfaces in link spaces:

\begin{prop}[{\cite[Prop 3.1]{boundarycomp}}]\label{prop:uniquecollar}
Within a \textit{torus}$\times I$, any pair of inner and outer boundary slopes describes a unique geometrically incompressible one-sided surface, up to isotopy.
\end{prop}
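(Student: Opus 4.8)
The plan is to work in $M = T^2\times I$, which is irreducible, and to exploit the product sweep-out by the level tori $T_t = T^2\times\{t\}$, handling existence and uniqueness separately. Throughout I would use the standard fact that for a \gi{} surface the two-sided frontier $\tilde{K}$ of its twisted $I$-bundle neighbourhood is an incompressible two-sided surface, together with the rigidity of $T^2\times I$: every connected incompressible two-sided surface in $T^2\times I$ is a horizontal torus or a vertical (hence $\partial$-parallel) annulus. This classification is the engine that will both constrain the surfaces and drive the final comparison.

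For existence, I would first note that a one-sided surface $K$ with inner slope $s_0$ on $T_0 = T^2\times\{0\}$ and outer slope $s_1$ on $T_1 = T^2\times\{1\}$ carries a class in $H_2(M,\partial M;\mathbb{Z}_2)$, so the admissible pairs of slopes are precisely those compatible with a nonzero such class. For any admissible pair I would then build $K$ concretely from a core M\"obius band over an essential curve together with vertical annuli running out to the two boundary tori, and verify \gi{}ness directly against Definition~\ref{defn:gi}.

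For uniqueness, suppose $K_1$ and $K_2$ are \gi{} one-sided surfaces realising the same inner and outer slopes. Since simple closed curves of equal slope on a torus are isotopic, I would first isotope $K_2$ so that $\partial K_1 = \partial K_2$, and then minimise $|K_1\cap K_2|$ and argue that the minimum is empty. Closed intersection curves are handled by the usual innermost-disc induction: an innermost curve on $K_i$ bounds a disc in $M$ whose boundary, by geometric incompressibility, must be inessential on $K_i$, so irreducibility of $T^2\times I$ lets me remove it by isotopy. Intersection arcs (with endpoints on $\partial M$) are removed using the one-sided boundary-compression analysis of \cite{boundarycomp}: an outermost arc on $K_i$ cuts off a bigon, and the boundary-compression dichotomy recorded above (M\"obius versus genuinely orientable boundary compression) shows that for a \gi{} surface the only surviving case is a trivial bigon that can be isotoped away. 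Once $K_1$ and $K_2$ are disjoint with common boundary, they cobound submanifolds of $M$, and passing to the orientation double covers $\tilde{K}_1,\tilde{K}_2$ and invoking the classification of incompressible surfaces in $T^2\times I$ forces $K_1$ and $K_2$ to be parallel, hence isotopic.

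The step I expect to be the main obstacle is the arc-removal at the boundary. Because the surfaces are one-sided, the classical two-sided boundary-incompressibility arguments do not transfer verbatim; one must instead keep track of the M\"obius-band boundary compressions, whose bigons meet $\partial M$ on locally opposite sides of $\partial K$, and distinguish them from the orientable compressions that a \gi{} surface cannot admit. Controlling these singular, one-sided compressions is exactly the content imported from \cite{boundarycomp}, and it — rather than the innermost-disc step for closed curves — is where the real work lies. A secondary subtlety is the concluding ``disjoint implies parallel'' step, which does not follow from the two-sided theory directly and must be routed through the double covers $\tilde{K}_i$.
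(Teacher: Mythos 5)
This proposition is imported from \cite[Prop 3.1]{boundarycomp}; the present paper states it without proof, so there is no in-paper argument to compare against, and your proposal must stand on its own. It does not, because the ``standard fact'' on which your whole engine runs is false: for a \gi{} one-sided surface, the frontier $\tilde{K}$ of its twisted $I$-bundle neighbourhood is \emph{not} in general incompressible. Geometric incompressibility (Definition~\ref{defn:gi}) forbids only embedded compressing discs and does not give $\pi_1$-injectivity --- the paper itself emphasises that \gi{} one-sided surfaces normally admit singular compressing discs, and that the twisted $I$-bundle boundary can compress. In $T^2 \times I$ this failure is unavoidable rather than occasional: $\pi_1(T^2\times I)\cong\mathbb{Z}^2$ is abelian, while a one-sided surface with a boundary curve on each torus has free fundamental group of rank at least $2$, which cannot inject into $\mathbb{Z}^2$; since $\pi_1$ of the frontier sits inside $\pi_1(K)$ with index $2$ and free groups are torsion-free, the kernel meets it non-trivially, so by the loop theorem $\tilde{K}$ is always compressible. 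Hence the classification of incompressible surfaces in $T^2\times I$ tells you nothing about $\tilde{K}_1,\tilde{K}_2$, and your concluding ``disjoint implies parallel'' step, which is routed entirely through that classification, collapses.

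The other two steps fail concretely as well. For existence, a core M\"obius band with vertical annuli realises only slope pairs that differ by a single M\"obius band move (geometric intersection number $\pm 2$); in general the \gi{} surface determined by inner slope $s_0$ and outer slope $s_1$ has non-orientable genus growing with the complexity of the pair, and must be built by iterating M\"obius band additions along the path joining the slopes in the tree $\Gamma$ --- and proving the result is \gi{} is itself the substantive work, not a direct check against the definition. For uniqueness, the arc-removal step cannot run as you describe: the surfaces in question genuinely \emph{are} boundary compressible, via M\"obius band compressions --- that is exactly the one-sided dichotomy quoted in Section 2, which rules out only the \emph{orientable} compressions. So an outermost bigon of intersection need not be trivial and cannot simply be isotoped away; it may be an honest M\"obius band boundary compression, and performing it changes the surface and its boundary slope, derailing any induction on $|K_1\cap K_2|$. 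The argument in \cite{boundarycomp} runs in the opposite direction: boundary-compress the surface down to a boundary-incompressible core, which in $T^2\times I$ must be a vertical annulus, then recover the surface by re-attaching M\"obius bands, with uniqueness coming from the uniqueness of each M\"obius band addition at a given slope (cf.\ Lemma~\ref{lem:boundary_curves}) and the uniqueness of paths in a tree --- not from minimising intersections between two competing copies.
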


\begin{lem}[{\cite[Lem 3.3]{boundarycomp}}]\label{lem:boundary_curves}
Given a bounded, \gi{}, boundary incompressible surface $K_0$ in a link space $M$, there is at most one \gi{}, boundary compressible surface $K$, with a given boundary slope $\sigma_k$ on $T_k$, that can be obtained by attaching M\"obius bands to $K_0$.
\end{lem}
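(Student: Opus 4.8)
The plan is to reduce the statement to the uniqueness result of Proposition~\ref{prop:uniquecollar} by confining all of the attached M\"obius bands to a product collar of $T_k$. Write $\sigma_0$ for the slope of $\partial K_0$ on $T_k$. Since $K$ is built from $K_0$ by attaching M\"obius bands along $T_k$, the first step is to isotope every such band into a product collar $N = T_k \times I$, with $T_k \times \{1\} \subset \partial M$, in such a way that $K \setminus N$ is isotopic to $K_0$ with an open collar removed. Setting $S = K \cap N$, the surface $S$ is then a properly embedded one-sided surface in $N$ whose inner boundary $S \cap (T_k \times \{0\})$ is the loop of slope $\sigma_0$ and whose outer boundary $S \cap (T_k \times \{1\})$ is the single loop of slope $\sigma_k$.

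Next I would check that $S$ is \gi{} in $N$. A disc in $N$ compressing $S$ yields a compressing disc for $K$ in $M$, provided its boundary curve stays essential on $K$; this essentiality follows because $K_0$ is \gi{} and the gluing loop of slope $\sigma_0$ is itself essential, so the curve cannot bound a disc in the part $K \setminus N$. A genuine geometric compression of $K$ would contradict the hypothesis that $K$ is \gi{}. Hence $S$ is a \gi{} one-sided surface in the \textit{torus}$\times I$ region $N$ realising the prescribed inner slope $\sigma_0$ and outer slope $\sigma_k$.

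Proposition~\ref{prop:uniquecollar} now applies verbatim to $N$: the ordered pair of slopes $(\sigma_0, \sigma_k)$ determines $S$ uniquely up to isotopy. Because $K_0$ is given and its boundary slope $\sigma_0$ cannot be altered --- boundary incompressibility of $K_0$ forbids any modification of $\partial K_0$ --- the reconstructed surface $K = K_0 \cup_{\sigma_0} S$ is determined up to isotopy by $K_0$ and $\sigma_k$ alone. This yields at most one such $K$, as claimed.

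The main obstacle I anticipate is the first step: localising the M\"obius bands into a single collar while preserving \gi{}-ness. Here I would use the structure recalled before the lemma, namely that a boundary compression of the \gi{} surface $K$ at the single essential loop $K \cap T_k$ is necessarily a M\"obius-band boundary compression, with $\beta$ having ends on locally opposite sides of $\partial K$. Exploiting the hypothesis that $K$ is boundary compressible, I would use these M\"obius-band boundary compressions together with the boundary incompressibility of $K_0$ to slide each band off $K_0$ and into $N$ without creating new intersections with $K_0$ or changing $\sigma_0$. Once the attachments are confined to $N$ and $S$ is seen to be \gi{}, Proposition~\ref{prop:uniquecollar} closes the argument at once.
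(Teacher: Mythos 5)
This lemma is imported: the paper states it as \cite[Lem 3.3]{boundarycomp} and gives no proof of it, so there is no in-paper argument to compare yours against. Judged against the machinery the paper does quote, your reduction to Proposition~\ref{prop:uniquecollar} is the natural route and is essentially correct: Möbius band attachment at a torus boundary component is, by construction, the inverse of a Möbius-band boundary compression and is supported in a collar of $T_k$, so the decomposition $K = (K \setminus N) \cup S$ with $S \subset N = T_k \times I$ carrying inner slope $\sigma_0$ and outer slope $\sigma_k$ comes essentially for free --- the ``main obstacle'' you anticipate is not really an obstacle. This is also exactly how the present paper uses the lemma (e.g.\ for $\bar{P}_{(4,-1)}$, the two extra Möbius bands are explicitly regarded as living in a neighbourhood of the boundary torus), so your architecture --- collar splitting plus uniqueness in \textit{torus}$\times I$ --- is almost certainly the intended one.

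The one step you justify with the wrong reason is the essentiality transfer: you claim a curve $\ell$ essential in $S$ stays essential in $K$ ``because $K_0$ is \gi{}''. Incompressibility of $K_0$ is not the relevant input. The correct argument runs through the gluing circle $c = K \cap (T_k \times \{0\})$, which separates $K$ into $S$ and a copy of $K_0$: if $\ell$ bounded a disc $D \subset K$, then since $\ell$ is disjoint from $c$, either $D \subset S$ (contradicting essentiality of $\ell$ in $S$) or $c \subset D$, in which case $c$ bounds a subdisc of $D$, forcing one of the two sides of $c$ to be a disc. The $S$ side cannot be a disc (it has the outer boundary on $T_k \times \{1\}$), and the $K_0$ side is a disc only if $K_0$ itself is one, which is excluded because $\sigma_0$ is an essential slope on an incompressible boundary torus of the link space. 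So the input you actually need is that $c$ is essential in $K$ (equivalently, that $K_0$ is not a disc), not \gi{}-ness of $K_0$. With that repair, plus the standard (and usually suppressed) care in extending the ambient isotopy of $S$ in $N$ across the fixed $K_0$ part, your proposal is a correct proof of the statement.
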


\begin{defn}\label{defn:polygonal}
A disc $d \subset M$ for a bounded surface $K$ in a link space $M$ is polygonal if $d^{\circ}$ is embedded and $\partial d$ is partitioned into $2n$ arcs, $\lambda_1, \lambda_2, \ldots, \lambda_{2n}$ for $n \geq 2$, where $\lambda_i \cap \partial K = \partial \lambda_i$ for all $i$, and $\lambda_{2k-1} \subset K$ and $\lambda_{2k} \subset \partial M$ for $k=1, \ldots, n$. Each arc $\lambda_{2k-1}$ is essential in $K$ and each arc $\lambda_{2k}$ is embedded, disjoint from all other $\lambda_{2s}$, $s \not= k$ and not homotopic into $\partial K$ along $\partial M$ rel endpoints.
\end{defn}

\begin{prop}[{\cite[Prop 4.4]{boundarycomp}}]\label{prop:boundary_incomp}
Any bounded, \gi{} one-sided surface embedded in a link manifold that is at the collection $\{T_k\} \subseteq \partial M$ has a boundary incompressible restriction outside of a neighbourhood of $\{T_k\}$, which is unique up to isotopy if the manifold contains no non-trivial, embedded, irreducible, polygonal discs for the boundary incompressible surface at $\{T_k\}$.
\end{prop}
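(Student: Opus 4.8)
The plan is to establish existence and uniqueness separately, both governed by boundary compressions that remove M\"obius bands. Fix a collar $N(\{T_k\}) \cong \bigcup_k (T_k \times I)$ and write $K'$ for the restriction of $K$ to $M \setminus N(\{T_k\})$. For existence, I would run a maximal sequence of boundary compressions on $K'$. If $K'$ is already boundary incompressible we are done; otherwise there is a boundary compressing bigon $B$. By the remark preceding Prop~\ref{prop:uniquecollar}, at a single essential boundary loop a \gi{} surface admits no non-trivial orientable boundary compression, so $B$ must have its $\partial M$-arc with ends on locally opposite sides of $\partial K'$ and hence describes a M\"obius band compression. Compressing along $B$ pushes a M\"obius band into the collar and strictly simplifies the restriction (for instance raising $\chi$, since cutting along a properly embedded arc increases Euler characteristic by one), so only finitely many such moves occur. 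One checks that \gi{} is preserved at each stage — a compressing disc for the result would, reassembled with $B$, yield a geometric compression of $K$ — so the terminal surface $K_0$ is a \gi{}, boundary incompressible restriction, with the discarded collar pieces determined by their inner and outer slopes via Prop~\ref{prop:uniquecollar}.

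For uniqueness under the no-polygonal-disc hypothesis, suppose $K_0$ and $K_0'$ are two such restrictions, both \gi{} and boundary incompressible. I would put them in general position and isotope to minimise $|K_0 \cap K_0'|$. Using Def~\ref{defn:gi}, an innermost-disc argument on either surface removes every intersection circle bounding a disc, since a non-removable such circle would be a genuine compression; what survives is a family of essential arcs of intersection meeting $N(\{T_k\})$. If $K_0$ and $K_0'$ were non-isotopic this family would be non-empty, and an outermost arc would cut off a subdisc $d$ of one surface whose interior is disjoint from the other.

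The hard part will be upgrading such a subdisc into a genuine polygonal disc so as to contradict the hypothesis. Boundary incompressibility of both surfaces forbids the outermost region from being a single bigon carrying an essential surface-arc, so after isotoping the configuration into the collar and reading off the arcs where $d$ meets $K_0$ and where it meets $\{T_k\}$, the minimal obstructing disc alternates between $n \geq 2$ essential arcs on the surface and $n$ arcs on $\partial M$ — exactly the data of Def~\ref{defn:polygonal}. The delicate points, and where I expect the real work to lie, are verifying that this extracted disc is embedded on its interior and irreducible, that its $\partial M$-arcs are mutually disjoint and not homotopic into $\partial K$, and that no sequence of trivial bigon moves can reduce $n$ below $2$. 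Granting these, the disc is a non-trivial polygonal disc for the boundary incompressible surface at $\{T_k\}$, contradicting the hypothesis; hence $K_0 \cap K_0'$ contains no essential arcs, the surfaces cobound a product region, and they are isotopic, with the collar pieces matching by Prop~\ref{prop:uniquecollar}. Lem~\ref{lem:boundary_curves} provides a useful consistency check here, since it already pins down the boundary compressible surface obtained by reattaching M\"obius bands once $K_0$ and its slope are fixed.
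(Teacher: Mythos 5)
A preliminary remark: the paper does not prove this proposition at all --- it is imported wholesale from \cite[Prop 4.4]{boundarycomp} --- so there is no internal argument to compare yours against, and what follows judges your proposal on its own terms. Your existence half is essentially fine: at a single essential boundary loop every non-trivial boundary compression of a \gi{} surface is a M\"obius band compression (the remark preceding Proposition~\ref{prop:uniquecollar}), each such move strictly raises Euler characteristic so a maximal sequence terminates, and \gi{} persists because the restriction is $\pi_1$-injective in the reassembled surface.

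The uniqueness half has a genuine gap: you propose to produce the polygonal disc as an outermost subdisc of one restriction cut along its intersection with the other, and that mechanism cannot work. A subdisc of $K_0'$ is cut off only by intersection arcs that are \emph{inessential} in $K_0'$; arcs of $K_0 \cap K_0'$ that are essential in \emph{both} surfaces cut off no disc in either, and nothing in your argument excludes that configuration. The paper's own main example shows it is exactly what happens: for $\frac{1}{2} < \frac{p}{q} < \frac{3}{2}$ the single \gi{} surface $K_{(0,1)} \cong K_{(4,1)}$ has the two non-isotopic boundary incompressible restrictions $P_{(0,1)}$ and $P_{(4,1)}$, whose boundary slopes $(0,1)_K$ and $(4,1)_K$ differ --- so the restrictions can never be made disjoint and your endgame (no essential arcs, hence a product region, hence isotopic) is unreachable --- and which intersect in arcs like $\sigma_1, \sigma_2$ of Figure~\ref{fig:P_3} (drawn there for the symmetric surface $P_{(4,-1)}$), running once longitudinally around the fibre torus and essential in both surfaces, so no outermost disc piece exists. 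The polygonal disc that actually witnesses the failure of uniqueness in this example is the quadrilateral $D_3$ of Figures~\ref{fig:strips} and~\ref{fig:isotopy}: its boundary consists of the arcs $\lambda, \gamma$ on $P_{(0,1)}$ together with two meridional arcs on $T$; it is not assembled from $\sigma_1, \sigma_2$ and is not a piece of $P_{(4,1)}$ at all. As the closing remark and the isotopy argument of Section~\ref{sect:fig8} indicate, such a disc arises from the \emph{divergence of two boundary-compression sequences} --- the intermediate surface $\hat{P}_{(0,1)}$ admits the dual bigons $B$ and $D_3$, and compressing along one or the other lands on the two different restrictions. A correct uniqueness proof therefore has to run a confluence-type argument on the M\"obius band compression sequences themselves, showing that two inequivalent bigons for an intermediate surface combine into a polygonal disc for the terminal surface, rather than analysing the intersection pattern of the two terminal restrictions.
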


\section{Even Dehn fillings of Figure 8 knot space}\label{sect:fig8}

The $(2p, q)$ Dehn fillings of Figure 8 knot space, which are irreducible, hyperbolic, non-Haken $3$-manifolds (under modest restrictions of $p$ and $q$), have \gi{} \oss{} surfaces, as determined by the existence of non-zero $\mathbb{Z}_2$-homology. The \gi{}, boundary incompressible surfaces in the knot exterior alone have been known since Thurston~\cite{thurston}, providing a foundation for the study of the \gi{} surfaces in the filled manifolds. 

By a direct construction, it is possible to show that \gi{} one-sided surfaces in the filled manifolds are a strict subset of the surfaces that arise from completing the boundary incompressible surfaces in the knot space. Furthermore, the unique \gi{} splitting surface for any member of the class can be identified and is determined determined solely by the filling ratio $\frac{p}{q}$. This contrasts significantly with the dearth of information in the two-sided case; no classification exists for the strongly irreducible \tss{s} of any closed hyperbolic $3$-manifold.

\begin{thm}
Every $(2p, q)$ Dehn filling of Figure 8 knot space, where $p, q \in \mathbb{Z}$, $(2p, q) = 1$, $(2p, q) \not= (0,\pm 1), (2,\pm1), (4,\pm1)$, has a unique, \gi{} \oss{} surface, which can be identified precisely by the ratio $\frac{p}{q}$.
\end{thm}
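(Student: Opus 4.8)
The plan is to separate existence from uniqueness and to analyse an arbitrary \gi{} \oss{} surface by cutting it along the filling torus. Existence is immediate: writing $N$ for the closed filled manifold, $M$ for Figure 8 knot space, $V$ for the filling solid torus and $T=\partial M=\partial V$, so that $N=M\cup_T V$, the filling slope $2p\mu+q\lambda$ has even first coordinate, whence $H_1(N;\mathbb{Z})\cong\mathbb{Z}/2p$ carries $2$-torsion and $H_2(N;\mathbb{Z}_2)\neq 0$. Theorem~\ref{thm:incomp} then supplies at least one \gi{} \oss{} surface. The restriction on $(2p,q)$ guarantees that $N$ is irreducible, hyperbolic and non-Haken, which is exactly what lets me treat \gi{} surfaces and \oss{s} interchangeably throughout.

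For uniqueness, take any \gi{} \oss{} surface $K\subset N$ and isotope it to meet $T$ efficiently. First I would discard the curves of $K\cap T$ that bound discs on $T$ or are otherwise inessential, using that $K$ is \gi{} to reject the resulting compressions, leaving $K\cap T$ a nonempty family of parallel essential curves of a single slope $\sigma$. This cuts $K$ into $K_V=K\cap V$ and $K_M=K\cap M$. On the solid-torus side, any closed component of $K_V$ would be a closed surface in $V$, hence a sphere or compressible, and any $\partial$-parallel piece can be isotoped across $T$; so $K_V$ is a bounded \gi{} surface carrying all of the one-sidedness of $K$ inside $V$. Viewing a collar of $T$ as a \textit{torus}$\times I$ and applying Proposition~\ref{prop:uniquecollar}, the piece $K_V$ is determined up to isotopy by $\sigma$ together with the meridian slope $2p\mu+q\lambda$ of $V$, and the $\mathbb{Z}_2$-homology condition restricts $\sigma$ to the finite list of slopes compatible with a one-sided surface in $V$.

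On the exterior side, $K_M$ is \gi{}; after boundary-compressing off the M\"obius-band collars meeting $T$, I would pass to a boundary incompressible restriction $K_0$ in $M$. By Proposition~\ref{prop:boundary_incomp}, $K_0$ is unique up to isotopy once one checks that $M$ admits no non-trivial, embedded, irreducible polygonal disc (Definition~\ref{defn:polygonal}) for the relevant surface at $T$; granting this, the classification of Thurston~\cite{thurston} identifies $K_0$ as one of the finitely many \gi{}, boundary incompressible surfaces in Figure 8 knot space, whose boundary slopes are known explicitly. Lemma~\ref{lem:boundary_curves} then shows that, for the fixed slope $\sigma$, at most one \gi{} boundary compressible surface is recovered from $K_0$ by reattaching M\"obius bands. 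Combining the uniqueness of $K_V$ with the uniqueness of $K_0$ and of its M\"obius-band completion reassembles $K$ uniquely up to isotopy; tracking which Thurston surface and how many M\"obius bands are selected then expresses the outcome as a function of $\sigma$, and hence of $\frac{p}{q}$.

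The hard part will be the exterior analysis: verifying that Figure 8 knot space carries no non-trivial polygonal discs for the candidate surfaces, so that Proposition~\ref{prop:boundary_incomp} genuinely yields a unique $K_0$ rather than a family, and then doing the bookkeeping that pins $\sigma$, the choice of Thurston surface and the M\"obius-band count to a single value of $\frac{p}{q}$. This is also where the excluded fillings enter: $(0,\pm1)$, $(2,\pm1)$ and $(4,\pm1)$ are precisely the even slopes at which $N$ fails to be hyperbolic and non-Haken, and for $(0,\pm1)$ and $(4,\pm1)$ the filling slope coincides with a boundary slope ($0$ or $\pm4$) of one of Thurston's surfaces, so that surface caps off to a closed incompressible surface and the uniqueness argument degenerates.
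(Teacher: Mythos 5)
Your decomposition strategy (cut $K$ along $T$, get a unique \gi{} piece in the solid torus, a unique boundary incompressible restriction in the knot exterior, then reassemble) matches the skeleton the paper uses to generate its \emph{candidate} surfaces, but your uniqueness mechanism breaks at exactly the step you defer as ``the hard part'': verifying that Figure 8 knot space carries no non-trivial, embedded, irreducible polygonal discs for the candidate surfaces. That verification is not just hard, it is impossible --- the paper exhibits precisely such discs, the embedded quadrilateral discs $D_3, D_4$ for $P_{(0,1)}$, each with two boundary arcs on $T$. Consequently Proposition~\ref{prop:boundary_incomp} does not apply, and the uniqueness you want from it is genuinely false: for $\frac{1}{2} < \big|\frac{p}{q}\big| < \frac{3}{2}$ the unique closed \gi{} surface has \emph{two non-isotopic} boundary incompressible restrictions in $M_0$, namely $P_{(0,1)}$ and $P_{(4,\pm 1)}$, and the last section of the paper's proof is exactly an interpolation between these two decompositions (a bigon move dual to $D_3$, resp.\ $D_4$). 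So your reassembly argument can at best bound the \gi{} \oss{} surfaces by the list of three completions $K_{(0,1)}, K_{(4,1)}, K_{(4,-1)}$; it has no mechanism for collapsing that list to a single surface, which is the actual content of the theorem.

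What is missing is the entire second half of the paper's argument: (i) a relative genus comparison of the three completions, carried out by computing intersection numbers of their boundary slopes in filling-torus coordinates and placing them in the M\"obius band tree, which identifies the minimal genus candidate(s) as a function of $\frac{p}{q}$ --- these are automatically \gi{} because $M_{(2p,q)}$ is irreducible and non-Haken, so every splitting compresses to a \gi{} one and a compression of a minimal genus splitting is impossible; (ii) explicit compressing discs ($D_1$ for $K_{(4,-1)}$, $D_2$ for $K_{(0,1)}$) showing that each non-minimal candidate geometrically compresses to a minimal one, hence contributes no additional \gi{} surface; and (iii) in the regime with two minimal genus candidates, the explicit isotopy via $D_3$ (resp.\ $D_4$) showing the two are the same surface. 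Two smaller points: your citation of Proposition~\ref{prop:uniquecollar} (a \textit{torus}$\times I$ statement) for the solid-torus piece is misdirected --- the relevant fact, used in the paper through Lemma~\ref{lem:boundary_curves}, is that a \gi{} one-sided surface in a solid torus is determined up to isotopy by its boundary slope; and your reduction of $K \cap T$ to a single slope family silently needs the paper's argument excluding disconnected restrictions (no \gi{} one-sided surface in a solid torus has multiple boundary components, and annuli cannot join distinct orientable pieces into a closed non-orientable surface).
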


\begin{proof}
Let $M_0$ be the Figure 8 knot exterior, $M_{(2p, q)}$ the $(2p, q)$ filling of $M_0$ and $M_T = \overline{M_{(2p, q)} \setminus M_0}$ the solid filling torus. Consider the torus $T = \partial M_0 = \partial M_T$, which can be parametrised by meridian, longitude co-ordinates with reference to either side. Denote knot space and torus co-ordinates as $(m, l)_K$ and $(l^{\prime}, m^{\prime})_T$ respectively. Let $m, l$ be the canonical meridian and longitude for $T$ with respect to the knot space.

Choose torus co-ordinates such that the meridian disc for $M_T$ with boundary $(0, 1)_T$ is glued along the $(2p, q)_K$ curve by the filling. Note the interchanged order of the meridian and longitude in the two systems. This corresponds to the difference in conventions for labelling knot space and torus co-ordinates, which is dictated by the curve that generates homology in each case. Let $\mathbf{A}$ be the transition matrix from torus to knot space co-ordinates, where $a, b \in \mathbb{Z}$ are chosen such that $det(\mathbf{A})= 1$ and the Euclidean distance of $(a, b)$ from $(0, 0)$ is strictly less than that of $(2p, q)$:

\begin{displaymath}
\mathbf{A} = 
\left( \begin{array}{cc}
b & 2p\\
a & q\\
\end{array} \right)
\end{displaymath}

If a \oss{} surface is \gi{}, it can be isotoped to be incompressible and boundary incompressible in $M_0$.  By Thurston~\cite{thurston}, there are three such surfaces in Figure 8 knot space: the Seifert surface for the knot $P_{(0, 1)}$, and the two punctured Klein bottle spanning surfaces $P_{(4, 1)}, P_{(4, -1)}$ with boundary curves $(4, 1)_K, (4, -1)_K$ respectively. Note that $P_{(0, 1)}$ is a punctured torus with boundary $(0, 1)_K$, over which the knot space $M_0$ fibres. As the Figure 8 knot is small, the incompressible spanning surfaces are known to be free, as such, the complements of $P_{(0, 1)}, P_{(4, 1)}, P_{(4, -1)}$ in $M_0$ are handlebodies.

While it is possible to have non-intersecting copies of $P_{(0, 1)}$ in $M_0$, disconnected surfaces in $M_0$ are disregarded here: by Rubinstein~\cite{rubin78}, there are no \gi{} one-sided surfaces with multiple boundary components in a solid torus. Therefore, the only possible connecting surfaces in $M_T$ are annuli, as a cyclic fundamental group is required. However, a non-orientable surface cannot arise by joining distinct orientable components.

Having ascertained the three possible knot space components of any \gi{} \oss{}, consider the potential closures in $M_T$. By Lemma~\ref{lem:boundary_curves}, a \gi{} surface in a solid torus is completely determined by its boundary slope. As such, the surfaces can each be explicitly identified using $\mathbf{A}^{-1}$ to change to torus co-ordinates:

\begin{displaymath}
\mathbf{A}^{-1} = 
\left( \begin{array}{cc}
q & -2p\\
-a & b\\
\end{array} \right)
\end{displaymath}

\begin{center}
$(0, 1)_K = (-2p, b)_T$, $(4, 1)_K = (4q-2p, -4a+b)_T$, $(4, -1)_K = (-4q-2p, 4a+b)_T$.
\end{center}

By way of example, calculate the curves for $M_{(8, 3)}$:

\begin{displaymath}
\mathbf{A}^{-1} = 
\left( \begin{array}{cc}
3 & -8\\
-1 & 3\\
\end{array} \right)
\end{displaymath}

\begin{center}
$(0, 1)_K = (-8, 3)_T$, $(4, 1)_K = (4, -1)_T$, $(4, -1)_K = (20, -7)_T$.
\end{center}

Take the unions of the \gi{} components in the torus and knot space to obtain the splitting surfaces $K_{(0, 1)}, K_{(4, 1)}, K_{(4, -1)}$ arising from the $(0, 1)_K,  (4, 1)_K, (4, -1)_K$ curves respectively. Notice that in the example, $K_{(0, 1)}$ and $K_{(4, 1)}$ both have non-orientable genus $4$, while $K_{(4, -1)}$ has genus $6$.

Consider whether the splitting surfaces thus obtained are \gi{} in the filled manifolds $M_{(2p, q)}$:

Given $M_{(2p, q)}$ is irreducible and non-Haken by the restriction to non-exceptional cases, any splitting surface compresses to a \gi{} one~\cite{rubin78}. Therefore, all the minimal genus surfaces are inherently \gi{}, as any compressions would imply a lower genus splitting. In the example, both $K_{(0, 1)}$ and $K_{(4, 1)}$ are \gi{}.

In order to determine which surfaces are minimal genus, consider the relative genera of the completions of each in the solid filling torus:

Begin by calculating the intersection numbers between the boundary curves:

\hspace{0.2\textwidth}\parbox[t]{0.6\textwidth}{$|(4q-2p, -4a+b)_T \cap (-2p, b)_T| = 4$\\
$|(-4q-2p, 4a+b)_T \cap (-2p, b)_T| = -4$\\
$|(4q-2p, -4a+b)_T \cap (-4q-2p, 4a+b)_T| = 8$.}

Surfaces in a solid torus are related by a single M\"obius band addition or boundary compression if and only if their boundary slopes have intersection number $\pm 2$. As such, the intersection numbers of the co-ordinates on $T$ show that, in $M_T$, the surfaces $K_{(4, 1)}, K_{(4, -1)}$ are related to $K_{(0, 1)}$ by a difference of two M\"obius bands. This may involve one boundary compression and one addition, or two boundary compressions (additions). Furthermore, the intermediate surface achieved after a single move has boundary curve $(2q - 2p, -2a +b)$ between $K_{(4, 1)}, K_{(0, 1)}$ or $(2q + 2p, -2a - b)$ between $K_{(4, -1)}, K_{(0, 1)}$.

Given the known relationship by boundary compressions, the moves from $K_{(0,1)}$ to $K_{(4,1)}$ and $K_{(0,1)}$ to $K_{(4,-1)}$ each form chains of three vertices joined by two edges in $\Gamma$. By Observation 1 of Bartolini~{\cite[p.5]{boundarycomp}}, if two surfaces bounded by $(2p, q), (2p^{\prime}, q^{\prime})$ are known to be related by a boundary compression, the relative genera can be determined by a comparison of $|p|, |p^{\prime}|$. Hence, a local comparison of genus can be established:

\begin{center}
\begin{tabular}{|c|c|}
\hline
Filling ratio & Relative size of longitudinal co-ordinates \\
\hline
\hspace*{9mm} $\frac{p}{q} < -\frac{3}{2}$ & $|{-4q}{-2p}| < |2q + 2p| < |{-2p}| < |2q{-2p}| < |4q{-2p}|$\\
\hline
$-\frac{3}{2} < \frac{p}{q} < -\frac{1}{2}$ & $|2q + 2p| < |{-2p}| < |2q{-2p}| < |4q{-2p}|$\\
& $|2q + 2p| < |-{4q}{-2p}|$\\
\hline
$-\frac{1}{2} < \frac{p}{q} < \frac{1}{2}$\hspace*{3mm} & $|{-2p}| < |2q{-2p}| < |4q{-2p}|$\\
& $|{-2p}| < |2q + 2p| < |{-4q}{-2p}|$\\
\hline
$\frac{1}{2} < \frac{p}{q} < \frac{3}{2}$ & $|2q{-2p}| < |{-2p}| < |2q+2p| < |{-4q}{-2p}|$\\
& $|2q{-2p}| < |4q{-2p}|$\\
\hline
$\frac{3}{2} < \frac{p}{q}$\hspace*{8mm} & $|4q{-2p}| < |2q{-2p}| < |{-2p}| < |2q+2p| < |{-4q}{-2p}|$\\
\hline
\end{tabular}
\end{center}

Note that the partition of behaviour according to size of the ratio $\frac{p}{q}$ mirrors the Observations on branching found in the M\"obius band tree in Bartolini~\cite{boundarycomp}.

This data can be plotted schematically in the M\"obius band tree to get a visual representation of the relative genera of the surfaces (see Figures~\ref{fig:tree_difference_2}, \ref{fig:tree_difference_1}, \ref{fig:tree_difference_3}):

\begin{figure}[h]
   \footnotesize
   \centering
   \includegraphics[width=4.5in]{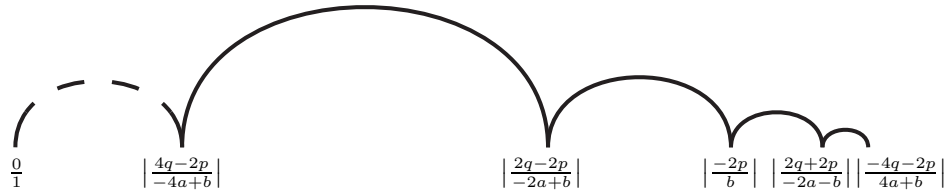}
   \put(-327,-12){$\frac{0}{1}$}
   \put(-277,-12){$\big|\frac{4q{-2p}}{{-4a}+b}\big|$}
   \put(-141,-12){$\big|\frac{2q{-2p}}{{-2a}+b}\big|$}
   \put(-65,-12){$\big|\frac{{-2p}}{b}\big|$}
   \put(-39,-12){$\big|\frac{2q+2p}{{-2a}{-b}}\big|$}
   \put(-7,-12){$\big|\frac{{-4q}{-2p}}{4a+b}\big|$}
   \caption{Relative position in $\Gamma$ for $\frac{p}{q} > \frac{3}{2}$ }
   \label{fig:tree_difference_2}
   \vspace{2mm}
\end{figure}

\begin{figure}[h]
   \footnotesize
   \centering   
   \includegraphics[width=4.5in]{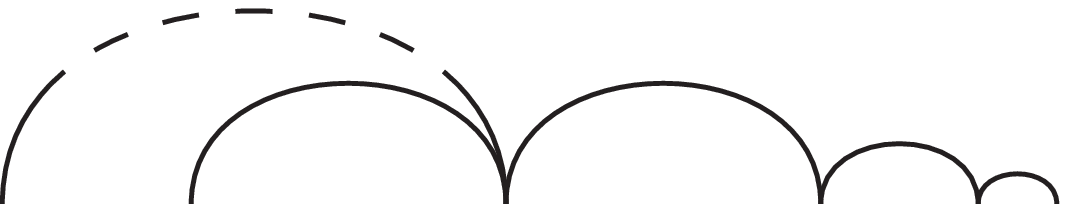}
   \put(-327,-12){$\frac{0}{1}$}
   \put(-283,-12){$\big|\frac{4q{-2p}}{{-4a}+b}\big|$}
   \put(-186,-12){$\big|\frac{2q{-2p}}{{-2a}+b}\big|$}
   \put(-85,-12){$\big|\frac{{-2p}}{b}\big|$}
   \put(-42,-12){$\big|\frac{2q+2p}{{-2a}{-b}}\big|$}
   \put(-10,-12){$\big|\frac{{-4q}{-2p}}{4a+b}\big|$}
   \caption{Relative position in $\Gamma$ for $\frac{1}{2} < \frac{p}{q} < \frac{3}{2}$ }
   \label{fig:tree_difference_1}
\end{figure}

\begin{figure}[h]
   \footnotesize
   \centering  
   \vspace{2mm}
   \includegraphics[width=4.5in]{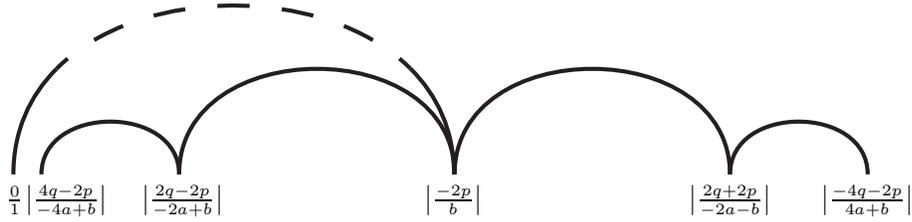}
   \put(-327,-12){$\frac{0}{1}$}
   \put(-320,-12){$\big|\frac{4q{-2p}}{{-4a}+b}\big|$}
   \put(-276,-12){$\big|\frac{2q{-2p}}{{-2a}+b}\big|$}
   \put(-169,-12){$\big|\frac{{-2p}}{b}\big|$}
   \put(-69,-12){$\big|\frac{2q+2p}{{-2a}{-b}}\big|$}
   \put(-19,-12){$\big|\frac{{-4q}{-2p}}{4a+b}\big|$}
   \caption{Relative position in $\Gamma$ for $-\frac{1}{2} < \frac{p}{q} < \frac{1}{2}$}
   \label{fig:tree_difference_3}
\end{figure}

The cases $\frac{p}{q} < -\frac{3}{2}$ and $-\frac{3}{2} < \frac{p}{q} < -\frac{1}{2}$ are symmetric to their positive counterparts, where $(4,-1)_K$ replaces $(4,1)_K$ and $(2q + 2p, -2a - b)_K$ replaces $(2q - 2p, -2a +b)_K$ throughout in the negative case.

The relative distance in $\Gamma$ shows that the minimal genus surfaces in the various cases are as follows:

\begin{center}
\begin{tabular}{|c|c|}
\hline
Filling ratio & Minimal genus surface(s) \\
\hline
\hspace*{9mm} $\frac{p}{q} < -\frac{3}{2}$ & $K_{(4,-1)}$\\
\hline
$-\frac{3}{2} < \frac{p}{q} < -\frac{1}{2}$ & $K_{(0, 1)}, K_{(4, -1)}$\\
\hline
$-\frac{1}{2} < \frac{p}{q} < \frac{1}{2}$\hspace*{3mm} & $K_{(0, 1)}$\\
\hline
$\frac{1}{2} < \frac{p}{q} < \frac{3}{2}$ & $K_{(0, 1)}, K_{(4, 1)}$\\
\hline
$\frac{3}{2} < \frac{p}{q}$\hspace*{8mm} & $K_{(4, 1)}$\\
\hline
\end{tabular}
\end{center}

Therefore, for all $p, q$, $(2p,q)=1$, $(2p, q) \not= (0,\pm 1), (2,\pm1), (4,\pm1)$, there is a unique, \gi{}, minimal genus, \oss{} surface in $M_{(2p, q)}$ when $\big|\frac{p}{q}\big| > \frac{3}{2}$ or $0 < \big|\frac{p}{q}\big| < \frac{1}{2}$; and, for $\frac{1}{2} < \big|\frac{p}{q}\big| < \frac{3}{2}$, there are two \gi{}, minimal genus \oss{s}, where the relative sign of $p,q$ identifies the surface(s) in all cases.

\begin{claim}
If $\frac{p}{q} > -\frac{1}{2}$, then $K_{(4, -1)}$ compresses to $K_{(0, 1)}$.
\end{claim}

By the preceding determination of relative surface genera under different filling ratios, for all $\frac{p}{q} > -\frac{1}{2}$, the surface $K_{(4,-1)}$ is not of minimal genus and differs from $K_{(0,1)}$ by two additional M\"obius bands in the filling torus $M_T$ (refer to Figures~\ref{fig:tree_difference_2}, \ref{fig:tree_difference_1}, \ref{fig:tree_difference_3}.)

Consider the surfaces $P_{(0, 1)}, P_{(4, -1)}$ in the knot space. Given Figure 8 knot space is a fibre bundle over its Seifert surface, the punctured torus $P_{(0, 1)}$, consider $P_{(4, -1)}$ in relation to this fibre structure. The boundary curves $\partial P_{(0, 1)}, \partial P_{(4, -1)}$ intersect in four points $x_1, x_2, x_3, x_4$, which divide $\partial P_{(0, 1)}$ into four segments: $c_1, c_2, c_3, c_4$.

By Thurston~\cite{thurston}, $P_{(4, -1)}$ meets $P_{(0, 1)}$ in $\sigma_1, \sigma_2$, which are disjoint parallel arcs that each run once longitudinally around $P_{(0, 1)}$. These arcs have endpoints $\{x_1, x_4\}, \{x_2, x_3\}$ respectively, which bound disjoint segments of $\partial P_{(0, 1)}$ \--- say $c_1, c_3$. See Figure~\ref{fig:P_3}.

\begin{figure}[h]
   \centering
   \includegraphics[width=3in]{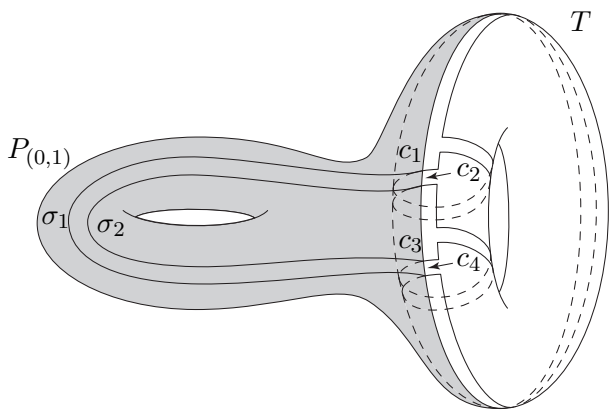}
   \put(-15, 143){$T$}
   \put(-228, 95){$P_{(0, 1)}$}
   \put(-215, 70){$\sigma_1$}
   \put(-194, 67){$\sigma_2$}
   \put(-80, 95){$c_1$}
   \put(-58, 88){$c_2$}
   \put(-80, 61){$c_3$}
   \put(-58, 54){$c_4$}
   \caption{$P_{(4, -1)}$ intersects $P_{(0, 1)}$ in the arcs $\sigma_1, \sigma_2$ and $T$ in the curve $(4, -1)_K$.}
   \label{fig:P_3}
\end{figure}

Since $K_{(4, -1)}$ differs from $K_{(0, 1)}$ in $M_T$ by two additional M\"obius bands, push these bands out of $M_T$, so that the surfaces coincide in the solid filling torus and $K_{(4, -1)}$ intersects $T$ in the $(-2p, q)_T = (0, 1)_K$ curve. By Lemma~\ref{lem:boundary_curves}, there is only one surface $\bar{P}_{(4, -1)} \subset M_0$ with boundary curve $(0, 1)_K$ that boundary compresses to $P_{(4, -1)}$, which is obtained by adding two parallel M\"obius bands to $P_{(4, -1)}$.

If $b_1, b_2 \subset \bar{P}_{(4, -1)}$ are the M\"obius bands of $K_{(4, -1)}$ pushed out of $M_T$, let $\bar{c}_2, \bar{c}_4$ be their intersections with $P_{(0, 1)}$.  Let $\bar{\sigma}_1, \bar{\sigma}_2$ be the remaining sections of $\sigma_1, \sigma_2$ in $\bar{P}_{(4, -1)}$. The disc $D_1 \subset M_0$ bounded by $\bar{\sigma}_1, \bar{\sigma}_2, \bar{c}_2, \bar{c}_4$ is a compressing disc for $\bar{P}_{(4, -1)}$

Compressing $\bar{P}_{(4, -1)}$ along $D_1$ results in a punctured surface in $M_0$, with boundary $(0, -1)_K = (0, 1)_K$, that does not intersect $P_{(0, 1)}$. As this surface shares a boundary slope with, yet is disjoint from, the orbit surface, it compresses to a surface isotopic to $P_{(0, 1)}$.

Having thus proved the claim, the result can be extended by symmetry to the case where $\frac{p}{q} < \frac{1}{2}$. Therefore, for $-\frac{1}{2} < \frac{p}{q} < \frac{1}{2}$, the surface $K_{(0,1)}$ is the unique \gi{} surface in $M_{(2p,q)}$ and for $\big|\frac{p}{q}\big| > \frac{1}{2}$, there are at most two \gi{} \oshs{} surfaces.

In the event that $K_{(0, 1)}$ is also of non-minimal genus, the result can be sharpened:

\begin{claim}
If $\frac{p}{q} > \frac{3}{2}$, then $K_{(0, 1)}$ compresses to $K_{(4, 1)}$.
\end{claim}

Since $K_{(0, 1)}$ has non-minimal genus, it differs from $K_{(4, 1)}$ in $M_T$ by two additional M\"obius bands. Push these bands out of $M_T$, so that the surfaces coincide in the solid filling torus and $K_{(0, 1)}$ intersects $T$ in the $(4q-2p, 4a+q)_T = (4, 1)_K$ curve. By Lemma~\ref{lem:boundary_curves}, there is only one surface $\bar{P}_{(0, 1)} \subset M_0$ with boundary curve $(4, 1)_K$ that boundary compresses to $P_{(0, 1)}$, which is obtained by adding two parallel M\"obius bands to $P_{(0, 1)}$.

Let $\alpha, \beta \subset \bar{P}_{(0, 1)}$ be arcs running along the M\"obius bands of $K_{(0, 1)}$ pushed out of $M_T$, with endpoints $\{a_1, a_2\}, \{b_1, b_2\} \in \partial \bar{P}_{(0, 1)}$ respectively. Let $\lambda$ be an arc that runs once longitudinally around $\bar{P}_{(0, 1)}$, with endpoints $\{a_1, b_1\}$. Let $\gamma$ be an arc that runs once meridionally around $\bar{P}_{(0, 1)}$, with endpoints $\{a_2, b_2\}$. See Figure~\ref{fig:P_2}.

The disc $D_2 \subset M_0$ bounded by $\alpha, \beta, \gamma, \lambda$ is a compressing disc for $\bar{P}_{(0, 1)}$.

\begin{figure}[h]
   \centering
   \includegraphics[width=3in]{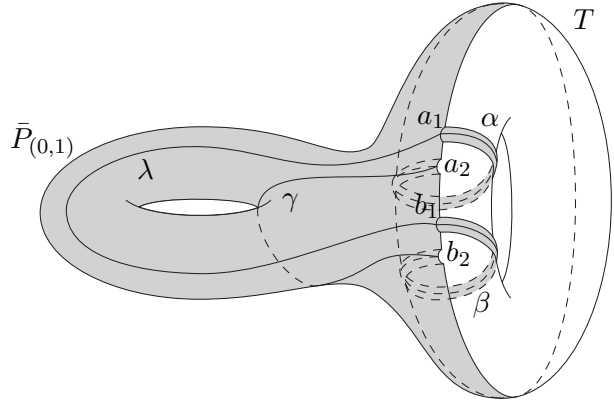}
   \put(-15, 140){$T$}
   \put(-228, 95){$\bar{P}_{(0, 1)}$}
   \put(-180, 83){$\lambda$}
   \put(-125, 73){$\gamma$}
   \put(-50, 103){$\alpha$}
   \put(-74, 104){$a_1$}
   \put(-64, 86){$a_2$}
   \put(-53, 33){$\beta$}
   \put(-75, 70){$b_1$}
   \put(-63, 52){$b_2$}
   \caption{The arcs $\alpha, \beta, \gamma, \lambda$ on $\bar{P}_{(0, 1)}$ that bound the compressing disc $D_2 \subset M_0$.}
   \label{fig:P_2}
\end{figure}

Compressing along $D_2$ results in a once-punctured Klein bottle in $M_0$, with boundary slope $(4, 1)_K$. By Thurston~\cite{thurston}, such a surface is unique up to isotopy, so the resulting surface is isotopic to $P_{(4, 1)}$. Therefore, $K_{(0, 1)}$ geometrically compresses to $K_{(4, 1)}$.

Again, symmetry shows that the corresponding result holds in the case where $\frac{p}{q} < -\frac{3}{2}$. Therefore, given a $(2p, q)$ Dehn filling of Figure 8 knot space with ratio $\frac{p}{q} > \frac{3}{2}$, the surface $K_{(4, 1)}$ is the unique \gi{} \oshs{} surface for the manifold, up to isotopy; and, for $\frac{p}{q} < -\frac{3}{2}$ such a surface is $K_{(4, -1)}$.

In the case where $\frac{1}{2} < \big|\frac{p}{q}\big| < \frac{3}{2}$, it has been established that there are two surfaces of minimal genus. In order to determine whether such surfaces are isotopic, consider the known structure of Figure 8 knot space:

There exists an embedded quadrilateral disc $D_3$ for $P_{(0,1)}$, with boundary running once longitudinally and once meridionally around the fibre torus, and two boundary curves on the manifold boundary, each running once meridionally around $T$. This can be seen in the construction for Figure~\ref{fig:P_2} above, where the curves $\alpha, \beta$ instead lie on $T$ for the original boundary incompressible surface. Notice that there is a dual symmetric disc $D_4$ with negative meridional slopes on $T$. Both discs $D_3, D_4$ can be visualised most readily in the twisted ribbon picture of the Seifert surface for the Figure 8 knot (see Figure~\ref{fig:strips}).

\begin{figure}[h]
   \centering
   \includegraphics[width=1.5in]{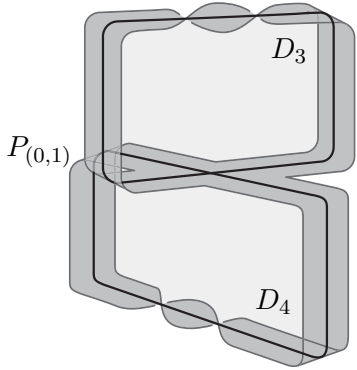}
   \put(-32, 117){$D_3$}
   \put(-132, 80){$P_{(0,1)}$}
   \put(-38, 22){$D_4$}
   \caption{The embedded quadrilateral discs $D_3, D_4$ as seen in the ribbon picture of $P_{(0,1)}$.}
   \label{fig:strips}
\end{figure}

In the case where $\frac{1}{2} < \frac{p}{q} < \frac{3}{2}$, it is known that $K_{(0,1)}, K_{(4,1)}$ have the same minimal genus. Push a single M\"obius band of $K_{(0,1)}$ out of $M_T$, to obtain the surface $\hat{P}_{(0,1)}$ with boundary slope $(2,1)_K$. The M\"obius band thus introduced to the knot space may be immediately boundary compressed back into $M_T$ via an embedded bigon $B$ to recover $P_{(0,1)}$. However, the introduction of this band matches exactly one of the boundary arcs of the quadrilateral disc $D_3$, thus this disc has only one boundary curves on $T$ for $\hat{P}_{(0,1)}$. Therefore, $D_3$ is an embedded bigon for $\hat{P}_{(0,1)}$, which is dual to the bigon $B$. See Figure~\ref{fig:isotopy}.

\begin{figure}[h]
   \centering
   \includegraphics[width=3in]{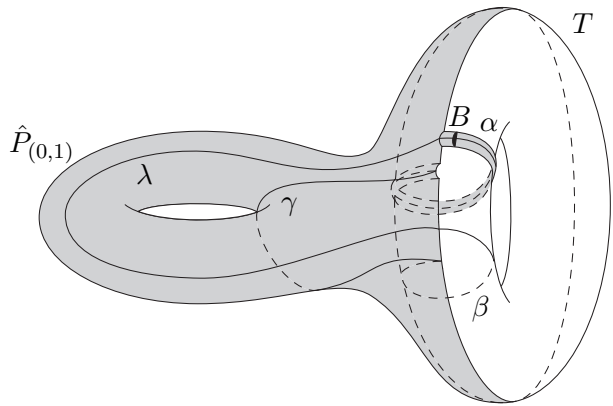}
   \put(-15, 140){$T$}
   \put(-228, 95){$\hat{P}_{(0, 1)}$}
   \put(-180, 83){$\lambda$}
   \put(-125, 73){$\gamma$}
   \put(-50, 103){$\alpha$}
   \put(-62, 105){$B$}
   \put(-53, 33){$\beta$}
   \caption{The arcs $\alpha, \gamma, \lambda \subset \hat{P}_{(0, 1)}$ and $\beta \subset T$ that bound the embedded bigon $D_3 \subset M_0$ dual to $B$.}
   \label{fig:isotopy}
\end{figure}

Boundary compressing $\hat{P}_{(0,1)}$ along the bigon $D_3$ results in a surface $\hat{P}_{(0,1)}^{\prime}$ with boundary $(4,1)_K$. If this surface were boundary compressible, it would boundary compress to one of $P_{(0,1)}$ or $P_{(4,1)}$ by minimality of genus. However, within the solid filling torus, the move in the knot space from $P_{(0,1)}$ to $\hat{P}_{(0,1)}^{\prime}$ has involved a subtraction followed by an addition of a M\"obius band to $K_{(0,1)} \cap M_T$. If $\hat{P}_{(0,1)}^{\prime}$ were to boundary compress, terminating by obtaining either of $P_{(0,1)}$ or $P_{(4,1)}$, such compressions would add further M\"obius bands to $K_{(0,1)} \cap M_T$, contradicting minimality of genus. As such, $\hat{P}_{(0,1)}^{\prime}$ is boundary incompressible and isotopic to $P_{(4,1)}$ in $M_0$. Therefore, for $\frac{1}{2} < \frac{p}{q} < \frac{3}{2}$, the surfaces $K_{(0,1)}, K_{(4,1)}$ are isotopic.

By symmetry, a similar pair of moves can be used to isotope $K_{(0,1)}$ along $D_4$ to get $K_{(4,-1)}$, showing that for $-\frac{3}{2} < \frac{p}{q} < -\frac{1}{2}$, the surfaces $K_{(0,1)}, K_{(4,-1)}$ are isotopic.

Therefore, for the class where $\frac{1}{2} < \big|\frac{p}{q}\big| < \frac{3}{2}$, there is also a unique \gi{} surface, which is $K_{(0,1)}$ regardless of relative sign.
\end{proof}

Note that the final case addressed in this argument demonstrates that a \gi{} surface in a filled knot space may have non-isotopic boundary incompressible restrictions in the knot space. However, the Figure 8 knot exterior contains embedded quadrilateral discs for its boundary incompressible surfaces and, as demonstrated, one such disc is key to interpolating between the distinct restrictions. As shown in Proposition~\ref{prop:boundary_incomp}, embedded irreducibly polygonal discs in link spaces are related to movement of M\"obius bands between boundary components. Therefore, it is conjectured that non-isotopic restrictions are not generic and are related to the combinatorics of such polygonal discs. This is the topic of ongoing work.

\bibliographystyle{abbrv}
\bibliography{oshs}

\textit{Department of Mathematics\\
Oklahoma State University\\
Stillwater, Oklahoma 74078\\
United States}

Email: \texttt{bartolini@math.okstate.edu}

\end{document}